\definecolor{darkgreen}{rgb}{0,0.5,0}
\newtheorem{theorem}{Theorem}[section]
\newtheorem{lemma}[theorem]{Lemma}
\newtheorem{question}[theorem]{Question}
\newtheorem{corollary}[theorem]{Corollary}
\theoremstyle{definition}
\newtheorem{remark}[theorem]{Remark}
\newtheorem{definition}[theorem]{Definition}
\newcommand{\N}{\mathbb{N}}
\newcommand{\Z}{\mathbb{Z}}
\newcommand{\Q}{\mathbb{Q}}
\let\int\relax
\newcommand{\int}{\mathring}
\newcommand{\boundary}{\partial}
\DeclareMathSymbol{\wtilde}{\mathord}{largesymbols}{"65}
\title[0-concordance monoid has an infinite linearly independent set]{The 0-concordance monoid admits an infinite linearly independent set}
\author{Irving Dai}
\address{Department of Mathematics\\Stanford University\\Stanford, CA 94301, USA}
\email{irvingfdai@gmail.com}
\author{Maggie Miller}
\address{Department of Mathematics\\Stanford University\\Stanford, CA 94301, USA}
\email{maggie.miller.math@gmail.com}
\thanks{At the time of research, ID was supported by NSF grant DGE-1148900 and MM was supported by NSF grant DGE-1656466, both at Princeton University.}
\subjclass[2020]{57K45, 57K18}
\begin{document}

\maketitle

\begin{abstract}
Under the relation of $0$-concordance, the set of knotted 2-spheres in $S^4$ forms a commutative monoid $\mathcal{M}_0$ with the operation of connected sum. Sunukjian has recently shown that $\mathcal{M}_0$ contains a submonoid isomorphic to $\Z^{\ge 0}$. In this note, we show that $\mathcal{M}_0$ contains a submonoid isomorphic to $(\Z^{\ge 0})^\infty$. Our argument relates the $0$-concordance monoid to linear independence of certain Seifert solids in the spin rational homology cobordism group. 
\end{abstract}

\section{Introduction}\label{sec:intro}
In this note, we study a restricted notion of concordance between 2-knots in $S^4$, called $0$-concordance. Introduced by Melvin in~\cite{melvin}, the relation of $0$-concordance turns the set of knotted 2-spheres in $S^4$ into a monoid under the operation of connected sum. We call this the \textit{$0$-concordance monoid} and denote it by $\mathcal{M}_0$. In~\cite{melvin}, Melvin showed that Gluck twists along 0-concordant 2-knots produce diffeomorphic homotopy 4-spheres. It is thus a natural question to ask about the size of $\mathcal{M}_0$. See Section~\ref{sec:background} for definitions and further discussion.

In~\cite{sunukjian}, Sunukjian showed that $\mathcal{M}_0$ is nontrivial, and gave an explicit submonoid isomorphic to $\Z^{\ge 0}$. In this note, we show that a straightforward extension of his argument gives the following method for studying $\mathcal{M}_0$:

\begin{theorem}\label{mainthm}
Let $S_0$ and $S_1$ be 2-knots, and suppose that $S_0$ and $S_1$ bound embedded punctured rational homology spheres $\smash{\int{Y_0}}$ and $\smash{\int{Y_1}}$, respectively. If $S_0$ and $S_1$ are $0$-concordant, then there exists a spin rational homology cobordism from $Y_0$ to $Y_1$, with the spin structure on each $Y_i$ induced by restricting the trivial spin structure on $S^4$ to $\mathring{Y}_i$.
\end{theorem}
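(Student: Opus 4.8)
The plan is to use the concordance to sweep out a four-dimensional ``Seifert solid'' $V$, and to show that $V$ is a spin rational homology ball bounding $Y_1\#(-Y_2)$; the theorem then follows from the standard identification of such a ball with a rational homology cobordism. Fix a $0$-concordance $C$ from $S_1$ to $S_2$, viewed as a properly embedded copy of $S^2\times[0,1]$ in $S^4\times[0,1]$ meeting $S^4\times\{0\}$ in $S_1$ and $S^4\times\{1\}$ in $S_2$. After a small perturbation the height function $f\colon C\to[0,1]$ is Morse, and by definition of a $0$-concordance every regular level $C_t:=C\cap(S^4\times\{t\})$ is a disjoint union of embedded $2$-spheres. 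The critical points of $f$ realize four elementary moves on these levels: a \emph{birth} of a small unknotted sphere (which bounds a ball), a \emph{death} of such a sphere, a \emph{merge} of two spheres along a tube, and a \emph{split} of one sphere along a compressing circle.

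First I would construct a compact oriented $4$-manifold $V\subset S^4\times[0,1]$ by choosing a generic family of embedded Seifert solids $V_t$ with $\partial V_t=C_t$ interpolating between $V_0=\int{Y_1}$ and $V_1=\int{Y_2}$. Away from the critical levels $V_t$ varies by ambient isotopy, and across each critical level it is modified in the evident way: attach the ball bounded by a newborn sphere, delete it at a death, join two pieces by a $1$-handle dual to a merging tube, and compress along the disk bounded by a splitting circle. The genus-$0$ hypothesis is exactly what makes this possible, since it guarantees that newborn spheres bound balls and that splitting circles bound disks, so each modification can be performed embeddedly. Setting $V=\bigcup_t V_t$ gives $\partial V=\int{Y_1}\cup_{S_1}C\cup_{S_2}\int{Y_2}$, and because $C$ is a product this boundary is diffeomorphic to $\int{Y_1}\cup_{S^2}\int{Y_2}\cong Y_1\#(-Y_2)$.

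It remains to verify two homological properties of $V$. The spin condition is immediate: $S^4\times[0,1]$ is spin and $V$ is a two-sided codimension-one submanifold, so it has trivial normal bundle and inherits a spin structure, which restricts to spin structures on $Y_1$ and $Y_2$. The crux is to show that $V$ is a rational homology ball. For this I would analyze the handle decomposition of $V$ determined by $f$: away from the critical levels $V$ is a product, and each critical point attaches a handle dictated by the corresponding birth, death, merge, or split. The product structure of $C\cong S^2\times[0,1]$ together with the genus-$0$ condition should force the attaching spheres of these handles to be rationally null-homologous, so that they contribute trivially to $H_*(-;\Q)$ and the inclusion $\int{Y_1}\hookrightarrow V$ becomes a rational homology equivalence. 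I expect this homological bookkeeping --- keeping careful track of how the successive merges and splits interact with the components of the $V_t$ --- to be the main obstacle.

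Finally, I would convert $V$ into the desired cobordism by attaching a $4$-dimensional $3$-handle along the connected-sum $2$-sphere in $\partial V=Y_1\#(-Y_2)$. This surgery splits the boundary into $Y_1\sqcup(-Y_2)$; since the attaching sphere is separating, hence null-homologous, and since attaching a $3$-handle preserves both the spin structure and the rational homology type, the result is a spin rational homology cobordism from $Y_1$ to $Y_2$, as desired.
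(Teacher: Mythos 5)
Your overall strategy --- pushing a Seifert solid through the concordance and then splitting $Y_1\#(-Y_2)$ apart with a $3$-handle --- is in the right spirit, but the sweep-out as described breaks down at two points, and these are exactly the difficulties the paper's proof is organized around. First, \emph{deaths}: when a sphere component of $C_t$ shrinks to a point, the genus-zero hypothesis guarantees that the dying \emph{sphere} bounds a small ball in its level, but not that the component of your solid $V_t$ bounded by that sphere is a ball --- after earlier births, merges, and splits it can be an arbitrary punctured $3$-manifold, and there is no way to ``delete it at a death'' while keeping $V=\bigcup_t V_t$ a manifold with the advertised boundary. This is why the paper first splits the $0$-concordance at its middle level into two \emph{ribbon} concordances (Lemma~\ref{commonribbon}), each of which, read from $S_i$ toward the middle knot $R$, has only index-$0$ and index-$1$ critical points; one then pushes each Seifert solid to $R$ separately rather than sweeping all the way from $S_1$ to $S_2$. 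Second, \emph{embeddedness at the saddles}: the band guiding a merge or split is disjoint from the surface $C_t$ but not necessarily from the interior of the solid $V_t$, so attaching the corresponding $3$-dimensional $1$-handle generically produces ribbon self-intersections. Resolving these by cut-and-paste (Lemma~\ref{commonmfd}) is possible, but only at the cost of connect-summing with copies of $S^1\times S^2$; your claim that ``each modification can be performed embeddedly'' is not justified by the genus-zero condition, which constrains the cross-sectional surfaces and says nothing about the solids.

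These two problems propagate into the homological step, which you correctly flag as the main obstacle but leave entirely open: the $1$-handles from the saddles and the $S^1\times S^2$ summands from resolving ribbon intersections contribute nontrivially to $H_1$ and $H_2$, so the resulting $4$-manifold is not a rational homology ball, and no bookkeeping with rationally nullhomologous attaching spheres will make it one. In the paper this is handled by explicit further surgery: the two stabilized Seifert solids $M_i=Y_i\#_{k_i}(S^1\times S^2)$ for $R$ are made disjoint by passing to a large cyclic cover of the manifold obtained by surgering $S^4$ along $R$ (in $S^4$ they may intersect each other), the region $W$ between them is shown to have $H_2(W;\Q)$ carried by its boundary via a Mayer--Vietoris argument, and then $4$-dimensional $3$-handles and surgeries on loops kill $H_2$ and $H_1$ while preserving the spin structure. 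Your spin observation and the final $3$-handle splitting $Y_1\#(-Y_2)$ into $Y_1\sqcup(-Y_2)$ are fine, but they sit on top of a construction that does not go through as stated.
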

\noindent
The proof of Theorem~\ref{mainthm} is an easy consequence of previous techniques used by various authors (see for example \cite{sunukjian, ruberman2}), but we have been unable to find a statement of this result in the literature.

Theorem~\ref{mainthm} immediately allows us to obtain many linearly independent families in $\mathcal{M}_0$.\footnote{Here, by a \textit{linear relation in} $\mathcal{M}_0$, we mean a 0-concordance between two connected sums $\# c_i S_i$ and $\#c_j S_j$, with all coefficients non-negative (since $\mathcal{M}_0$ is a monoid). Linear independence is then defined in the obvious way.} Indeed, let $\{S_i\}$ be a family of 2-knots, and suppose that each $S_i$ bounds an embedded Seifert solid $\smash{\int Y_i}$, where $Y_i$ is a rational homology sphere. Then any (non-negative) connected sum $\# c_i S_i$ bounds the punctured rational homology sphere $\smash{\int Y}$, where $Y = \# c_i Y_i$. It follows from Theorem~\ref{mainthm} that a nontrivial linear relation among the $[S_i]$ in $\mathcal{M}_0$ induces a nontrivial linear relation among the $[Y_i]$ in the spin rational homology cobordism group $\smash{\Theta^{spin}_{\Q}}$.

It thus suffices to find families of 2-knots admitting Seifert solids which are linearly independent in $\smash{\Theta^{spin}_{\Q}}$. We list several such families which follow more-or-less directly from various results appearing in the literature. Our first example is an immediate consequence of work of Aceto, Celoria, and Park~\cite{daniele}, who studied rational homology cobordisms between lens spaces:

\begin{corollary}\label{2bridgecor}
Let $\mathcal{T}$ be any linearly independent family of 2-bridge knots in the (classical) knot concordance group. Let
\[
\mathcal{F} = \{[S] \mid S \text{ is the 2-twist spin of } K \in \mathcal{T}\}.
\]
Then $\mathcal{F}$ is linearly independent in $\mathcal{M}_0$. For example, we may take $\mathcal{T}$ to be the set of torus knots $T(2, p)$, for $p \geq 3$ odd.
\end{corollary}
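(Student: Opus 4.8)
The plan is to produce explicit Seifert solids for the members of $\F$, feed them into the framework following Theorem~\ref{mainthm}, and then detect the resulting relations in $\Theta^{spin}_{\Q}$ using the lens space computations of Aceto--Celoria--Park~\cite{daniele}. First I would recall Zeeman's theorem on twist-spun knots: the $2$-twist spin $S = \tau_2 K$ of a knot $K \subset S^3$ is a fibered $2$-knot whose fiber is the punctured double branched cover $\int{\Sigma_2(K)}$, sitting in $S^4$ as an embedded Seifert solid. When $K = b(p,q)$ is a $2$-bridge knot, $\Sigma_2(K) = L(p,q)$ is a lens space, hence a rational homology sphere; moreover $p$ is odd, so $L(p,q)$ admits a unique spin structure. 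Thus every $S \in \F$ bounds an embedded Seifert solid $\int{L(p,q)}$ whose filling is a rational homology sphere, which is exactly the hypothesis required by the discussion after Theorem~\ref{mainthm}.

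Second, I would run the reduction in the contrapositive. Suppose $\F$ admits a nontrivial linear relation in $\mathcal{M}_0$, i.e.\ a $0$-concordance between $\# c_i S_i$ and $\# d_j S_j$ with $S_i = \tau_2 K_i$. As explained after Theorem~\ref{mainthm}, these two $2$-knots then bound the punctured rational homology spheres $\int{Y}$ and $\int{Y'}$ with $Y = \# c_i L(p_i,q_i)$ and $Y' = \# d_j L(p_j,q_j)$. Theorem~\ref{mainthm} supplies a spin rational homology cobordism between $Y$ and $Y'$. Since each $p_i$ is odd, the lens spaces involved carry unique spin structures, so the spin decoration carries no extra information and this descends to a genuine relation $\# c_i L(p_i,q_i) = \# d_j L(p_j,q_j)$ in the rational homology cobordism group $\Theta_{\Q}$.

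Finally --- and this is the heart of the matter --- I would invoke Aceto--Celoria--Park~\cite{daniele}. Their analysis of rational homology cobordisms between connected sums of lens spaces shows that, for $2$-bridge knots, the double-branched-cover assignment $[K] \mapsto [\Sigma_2(K)] = [L(p,q)]$ preserves linear independence, sending a linearly independent family in the classical concordance group $\mathcal{C}$ to a linearly independent family in $\Theta_{\Q}$. Hence the relation among the $L(p_i,q_i)$ produced above forces a nontrivial relation among the $K_i$ in $\mathcal{C}$, contradicting the independence of $\T$; this proves that $\F$ is linearly independent in $\mathcal{M}_0$. For the stated example one takes $\T = \{\, T(2,p) = b(p,1) : p \geq 3 \text{ odd} \,\}$, which is linearly independent in $\mathcal{C}$, as can be read off from the distinct jump sets of the Tristram--Levine signature functions of the $T(2,p)$.

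The one step requiring genuine care is the appeal to \cite{daniele}: I would need to confirm that their results are stated, or can be repackaged, precisely as the implication ``linear independence in $\mathcal{C}$ $\Rightarrow$ linear independence of double branched covers in $\Theta_{\Q}$'' for the relevant $2$-bridge family, and to verify that passing through $\Theta^{spin}_{\Q}$ costs nothing once the $p_i$ are odd. The remaining ingredients --- Zeeman's fibration theorem and the signature computation for the torus knots --- are classical.
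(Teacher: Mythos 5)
Your proposal is correct and follows essentially the same route as the paper: Zeeman's theorem to realize $\int{\Sigma_2(K)} = \int{L(p,q)}$ as a Seifert solid, Theorem~\ref{mainthm} to convert a $0$-concordance relation into a (spin) rational homology cobordism relation among connected sums of lens spaces, and \cite[Proposition 2.7]{daniele} (that $\beta_2$ restricts to an isomorphism from the $2$-bridge subgroup onto the subgroup of $\Theta_{\Q}$ generated by lens spaces with odd order) to pull the contradiction back to the classical concordance group. The only cosmetic difference is that the paper phrases the spin step in the other direction --- independence in $\Theta_{\Q}$ implies independence in $\Theta^{spin}_{\Q}$ because the latter relation is more restrictive --- whereas you forget the spin structure on the cobordism side; both are valid and equivalent here.
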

\begin{proof}
By work of Zeeman~\cite{zeeman}, the 2-twist spin of a knot $K$ bounds (a punctured copy of) its double branched cover $\Sigma_2(K)$. According to~\cite[Proposition 2.7]{daniele}, there is an isomorphism
\[
\beta_2|_\mathcal{B} : \mathcal{B} \rightarrow \left<\{L(r,s) \mid r \text{ odd}\}\right>
\]
where on the left, $\mathcal{B}$ is the subgroup of the classical knot concordance group generated by 2-bridge knots, and on the right we have the subgroup of the rational homology cobordism group $\Theta_{\Q}$ generated by lens spaces $L(r, s)$ for $r$ odd. The map $\beta_2$ is given by taking double branched covers. In particular, taking the double branched covers of any linearly independent family $\mathcal{T} \subseteq \mathcal{B}$ yields a linearly independent family of lens spaces in $\Theta_{\Q}$. These lens spaces are still linearly independent under the more restrictive relation of spin rational homology cobordism, so applying Theorem~\ref{mainthm} gives the desired result. The last part of the claim follows from the well-known fact that the 2-bridge torus knots $T(2, p)$ ($p>1$ odd) are linearly independent in the knot concordance group.
\end{proof}

In Corollary~\ref{2bridgecor}, we did not use the fact that Theorem~\ref{mainthm} produces a \textit{spin} rational homology cobordism. Spin rational homology cobordisms can be studied via Floer-theoretic techniques such as involutive Heegaard Floer homology; see \cite{hendricksmanolescu, hendricksmanolescuzemke}. Using this, in~\cite{irvingmatt} a systematic method is presented for obstructing spin rational homology cobordisms between linear combinations of Seifert fibered integer homology spheres. This is especially useful since the Brieskorn spheres $\Sigma(p, q, r)$ (with $p$, $q$, and $r$ positive and coprime) arise naturally as Seifert solids of the following twist-spun 2-knots:
\begin{enumerate}
\item $\Sigma(p, q, r)$ is the $p$-fold branched cover of the torus knot $T(q, r)$, and hence (by~\cite{zeeman}) is a Seifert solid for the $p$-twist spin of $T(q, r)$
\item $\Sigma(p, q, r)$ is the double branched cover of the pretzel knot $P(p, q, r)$, and hence (by~\cite{zeeman})  is a Seifert solid for the 2-twist spin of $P(p, q, r)$
\end{enumerate}
Note that in general, the $p$-twist spin of $T(q, r)$ is not the same as the 2-twist spin of $P(p, q, r)$; see~\cite{gordon, plotnick2}.

\begin{remark}
As written, many results derived from involutive Heegaard Floer homology (for example, the results of~\cite{irvingmatt}) deal with the integer homology cobordism group. However, these methods generally carry over without change to study spin rational homology cobordism. Briefly, the approach in question utilizes the involutive Heegaard Floer package of Hendricks and Manolescu. This assigns an invariant to any 3-manifold $Y$ equipped with a self-conjugate spin$^c$-structure; we similarly obtain induced cobordism maps whenever $W$ is a cobordism equipped with self-conjugate spin$^c$-structure. If $Y$ and $W$ have only one self-conjugate spin$^c$-structure, then one does not have to specify which spin$^c$-structure on $Y$ one is working in (or compute what a given self-conjugate spin$^c$-structure on $W$ restricts to at the ends). It is thus more convenient to work with integer or $\Z/2\Z$-homology cobordism, as in \cite[Theorem 1.3]{hendricksmanolescu} or \cite[Proposition 1.5]{hendricksmanolescuzemke}. However, such methods apply more generally; here, we will allow spin rational homology cobordism but only consider examples in which the ends are integer homology spheres.
\end{remark}

We thus immediately obtain:

\begin{corollary}\label{cor}
Let $\{\Sigma(p_i, q_i, r_i)\}_{i \in \N}$ be any family of Brieskorn integer homology spheres whose elements are linearly independent in $\Theta^{spin}_{\Q}$.\footnote{Note that we require $\Sigma(p_i, q_i, r_i)$ to be an integer homology sphere, even though we consider spin rational homology cobordism. This means that for each $i$, we have that $p_i$, $q_i$, and $r_i$ are coprime.} Then the families
\[
\mathcal{F}_1 = \{p_i\text{-twist spin of }T(q_i, r_i)\}_{i \in \N}
\]
and
\[
\mathcal{F}_2 = \{2\text{-twist spin of } P(p_i, q_i, r_i)\}_{i \in \N}
\]
are each linearly independent in $\mathcal{M}_0$. For example, we may choose 
\begin{align*}
p_i &= 2i+1, \\
q_i &= 4i+1, \text{ and} \\
r_i &= 4i+3.
\end{align*}
\end{corollary}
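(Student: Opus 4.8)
The plan is to treat the two assertions of the corollary separately: the \emph{general implication}, that linear independence of the $\Sigma(p_i,q_i,r_i)$ in $\Theta^{spin}_{\Q}$ forces linear independence of $\mathcal{F}_1$ and $\mathcal{F}_2$ in $\mathcal{M}_0$, and the \emph{verification} that the explicit family satisfies this hypothesis.

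For the general implication I would argue exactly as in the paragraph following Theorem~\ref{mainthm}. By fact~(1), the $p_i$-twist spin of $T(q_i,r_i)$ bounds a punctured copy of $\Sigma(p_i,q_i,r_i)$, and by fact~(2) the same Brieskorn sphere is a Seifert solid for the $2$-twist spin of $k(p_i,q_i,r_i)$; thus every $2$-knot in $\mathcal{F}_1$ and in $\mathcal{F}_2$ bounds a punctured copy of an integer homology sphere. Since an integer homology sphere carries a \emph{unique} spin structure, the clause ``for some choice of spin structure'' in Theorem~\ref{mainthm} is automatic here. A nontrivial $0$-concordance relation between connected sums $\#c_iS_i$ and $\#c_jS_j$ then yields, via Theorem~\ref{mainthm}, a spin rational homology cobordism between $\#c_i\Sigma(p_i,q_i,r_i)$ and $\#c_j\Sigma(p_j,q_j,r_j)$, i.e.\ a relation among the $[\Sigma(p_i,q_i,r_i)]$ in $\Theta^{spin}_{\Q}$ that is again nontrivial; linear independence of these classes rules this out, so $\mathcal{F}_1$ and $\mathcal{F}_2$ are linearly independent in $\mathcal{M}_0$.

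For the explicit family I would first record coprimality. With $p_i=2i+1$, $q_i=4i+1$, $r_i=4i+3$, the identities $q_i-2p_i=-1$ and $r_i-2p_i=1$ give $\gcd(p_i,q_i)=\gcd(p_i,r_i)=1$, while $r_i-q_i=2$ with $q_i,r_i$ both odd gives $\gcd(q_i,r_i)=1$; hence the triple is pairwise coprime and each $\Sigma(p_i,q_i,r_i)$ is an integer homology sphere. It then remains to show that $\{[\Sigma(p_i,q_i,r_i)]\}_{i\in\N}$ is linearly independent in $\Theta^{spin}_{\Q}$, for which I would invoke the involutive Heegaard Floer machinery of~\cite{irvingmatt}: compute the local equivalence class of each $\Sigma(p_i,q_i,r_i)$ from its negative-definite (almost-rational) plumbing, and separate the generators using the family of homomorphisms out of the local equivalence group supplied there.

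The hard part will be this last step. The content is to show that the involutive correction terms, together with the finer local-equivalence invariants, of $\Sigma(p_i,q_i,r_i)$ are staggered strongly enough as $i$ grows that in any hypothetical finite relation the highest-index contribution cannot be cancelled by the lower-index terms; one then peels off the generators one at a time by an inductive detection argument. The specific choice $p_i=2i+1$, $q_i=4i+1$, $r_i=4i+3$ is presumably tuned precisely so that the associated graded roots, and hence these invariants, admit a transparent closed form making the required separation visible.
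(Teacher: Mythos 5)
Your proposal is correct and follows the paper's route: the general implication is exactly the argument in the paragraph following Theorem~\ref{mainthm} combined with facts (1) and (2) about Seifert solids of twist-spins, and the paper likewise handles the explicit family by appealing to~\cite{irvingmatt}. The only substantive difference is that the ``hard part'' you flag --- linear independence of the $\Sigma(2i+1,4i+1,4i+3)$ in $\Theta^{spin}_{\Q}$ --- is not something you need to carry out yourself: it is precisely the main result of~\cite{irvingmatt} (stated there for integer homology cobordism, with the observation that the methods transfer without change to spin rational homology cobordism), and the paper's proof consists of nothing more than this citation.
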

\begin{proof}
It is known that the Brieskorn spheres $\Sigma(2i+1, 4i+1, 4i+3)$ (for $i \geq 1$) are linearly independent in $\Theta^{spin}_{\Q}$. (See for example \cite{irvingmatt}; as written, this obstructs integer homology cobordism, but see the previous remark.)
\end{proof}

Another interesting linearly independent family is given by considering the pretzel knots with
\begin{align*}
p_k &= -2k-1, \\
q_k &= 3k+2, \text{ and} \\
r_k &= 6k+1,
\end{align*}
for $k \geq 1$ odd. According to \cite[Lemma 5.1]{cochran}, the double branched cover of $P(p_k, q_k, r_k)$ is $\Sigma(|p_k|, q_k, r_k)$. In \cite{savk}, it is shown that the Brieskorn spheres $\Sigma(|p_k|, q_k, r_k)$ are linearly independent in the homology cobordism group. (Again, \cite{savk} deals with integer homology cobordism, but the methods of that paper carry over without change to spin rational homology cobordism.) Thus the 2-twist spins of the $P(p_k, q_k, r_k)$ are linearly independent in $\mathcal{M}_0$.

Taking any of the families discussed above, we have:
\begin{theorem}\label{0thm}
The 0-concordance monoid contains an infinitely generated submonoid isomorphic to $(\Z^{\ge 0})^\infty$.
\end{theorem}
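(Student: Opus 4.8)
The plan is to observe that this theorem is a purely formal consequence of the linear independence results established above, once we unwind the monoid-theoretic notion of linear independence defined in the footnote. First I would fix one of the infinite linearly independent families produced in the corollaries — say the family $\mathcal{F}_1 = \{[S_i]\}_{i \in \N}$ of Corollary~\ref{cor}, where $S_i$ denotes the $(2i+1)$-twist spin of $T(4i+1, 4i+3)$. Each $S_i$ is a 2-knot, so $[S_i] \in \mathcal{M}_0$, and I would let $N \subseteq \mathcal{M}_0$ be the submonoid generated by $\{[S_i]\}_{i \in \N}$ under connected sum.

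Next I would make the comparison homomorphism explicit. Writing $(\Z^{\ge 0})^\infty$ for the free commutative monoid on countably many generators — that is, the monoid of finitely-supported sequences $(c_i)_{i \in \N}$ of non-negative integers under coordinatewise addition — I would define
\[
\phi : (\Z^{\ge 0})^\infty \longrightarrow \mathcal{M}_0, \qquad \phi\big((c_i)_i\big) = \Big[\#_i\, c_i S_i\Big].
\]
Since connected sum is an associative, commutative operation with identity (the unknotted sphere) on $\mathcal{M}_0$, the map $\phi$ is a monoid homomorphism, and its image is by construction exactly $N$. It therefore remains only to check that $\phi$ is injective.

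The key step is that injectivity of $\phi$ is literally the statement that $\{[S_i]\}$ is linearly independent in $\mathcal{M}_0$ in the sense of the footnote. Indeed, $\phi((c_i)) = \phi((d_i))$ means precisely that $\#_i c_i S_i$ and $\#_i d_i S_i$ are $0$-concordant, i.e.\ that there is a linear relation between two non-negative combinations of the $[S_i]$; linear independence forces $c_i = d_i$ for every $i$, which is exactly injectivity. Because Corollary~\ref{cor} (via the reduction to the independence of the Brieskorn spheres $\Sigma(2i+1,4i+1,4i+3)$ in $\Theta^{spin}_{\Q}$) supplies such a family, $\phi$ is injective, and hence $N \cong (\Z^{\ge 0})^\infty$.

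I do not expect a genuine obstacle here: all of the substantive content has already been carried out, in Theorem~\ref{mainthm} and in the corollaries reducing $0$-concordance relations to relations in $\Theta^{spin}_{\Q}$. The only point that warrants care is bookkeeping — confirming that the monoid-theoretic notion of linear independence from the footnote coincides exactly with injectivity of the free-monoid map $\phi$, so that no cancellation (which is unavailable in a monoid lacking inverses) is implicitly invoked. Since the footnote phrases independence directly as equality of two non-negative combinations, this matches $\phi$ precisely, and the proof is complete.
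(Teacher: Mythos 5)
Your proposal is correct and takes essentially the same route as the paper, which states the theorem as an immediate consequence of the linearly independent families from the corollaries; you have simply made explicit the formal step of packaging linear independence as injectivity of the free-monoid homomorphism $\phi$, which is exactly the intended (and only) content of the deduction.
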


\begin{remark}\label{jasonremark}
In a recent preprint, Jason Joseph \cite{jason} has shown that $\mathcal{M}_0$ is not a group -- that is, there exist non-invertible elements of $\mathcal{M}_0$. His approach is to construct a homomorphism $\Delta$ from $\mathcal{M}_0$ to $\mathcal{I}(\mathbb{Z}[t^{\pm1}])$, the ideal class monoid of $\mathbb{Z}[t^{\pm1}]$, in which no nontrivial element is invertible. This homomorphism is induced by Alexander ideal. In particular, $\Delta([K])$ is trivial when $K$ has trivial Alexander ideal.

Any 2-knot $K$ as in Corollary \ref{cor} has trivial Alexander ideal, since the complement of $K$ is fibered by integer homology spheres, implying that the commutator subgroup of $\pi_1(S^4\setminus K)$ is perfect. Therefore, Corollary \ref{cor} implies that there is a $(\mathbb{Z}^{\ge 0})^\infty$ submonoid of Ker$(\Delta)$.

Conversely, Joseph provides examples of 2-knots which he can obstruct from being 0-concordant to the unknot but for which the approach used in this paper fails, e.g. the 2-twist spun Stevedore knot.
\end{remark}

\begin{question}
Is there torsion in $\mathcal{M}_0$?
\end{question}

\begin{remark}
Aceto, Celoria, and Park \cite{daniele} provide many examples of 2-torsion in the spin rational homology cobordism group. Their examples come from double branched covers of negative amphichiral 2-bridge knots (e.g.\ the double branched cover $L(5,2)$ of the figure eight knot). One might ask whether the classes of corresponding 2-twist spun knots (e.g.\ the 2-twist spun figure eight) are 2-torsion in ${\mathcal{M}}_0$. However, Joseph \cite{jason} proves that the class of a 2-twist spun knot of a 2-bridge knot is not even invertible in ${\mathcal{M}_0}$, so certainly cannot be torsion.
\end{remark}

\subsection*{Acknowledgements}
Thanks to Jason Joseph for interesting conversations about 0-concordance and to Nathan Sunukjian for helpful comments and observations. Thanks also to an anonymous referee for several helpful comments, including suggesting Remark \ref{jasonremark}.

\section{Background and Definitions}\label{sec:background}
In this section, we briefly review some relevant definitions and give some motivation for the study of 0-concordance. Recall that an oriented 2-sphere which is smoothly embedded in $S^4$ is called a \emph{2-knot}. We say that two 2-knots $K_0$ and $K_1$ are \emph{concordant} if there is a smoothly embedded cylinder $S^2 \times I \subseteq S^4 \times I$ whose boundary is the disjoint union of $K_0 \times \{0\}$ and $K_1 \times \{1\}$. In this note, we study a restricted notion of concordance introduced by Melvin~\cite{melvin}. One reason for this is the following well-known theorem of Kervaire~\cite{kervaire}:

\begin{theorem}[Kervaire~\cite{kervaire}]
Every 2-sphere smoothly embedded in $S^4$ is the boundary of a 3-ball smoothly embedded in $B^5$.
\end{theorem}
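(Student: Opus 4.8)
The plan is to realize the 2-knot $K$ as the boundary of a nicely embedded 3-manifold and then to simplify that 3-manifold by ambient surgery inside $B^5$ until it becomes a 3-ball. First I would produce a \emph{Seifert solid} for $K$: since $K \cong S^2$ has trivial normal bundle in $S^4$ and, by Alexander duality, $H_1(S^4 \setminus K) \cong \Z$ is generated by a meridian, the map $S^4 \setminus K \to S^1$ measuring meridional linking number is smoothly homotopic to one whose regular preimage is a compact oriented 3-manifold $M \subseteq S^4$ with $\partial M = K$. Pushing the interior of $M$ into the interior of $B^5$ rel boundary, I obtain a properly embedded $M \subseteq B^5$ with $\partial M = K \subseteq S^4 = \partial B^5$ and orientable rank-2 normal bundle. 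The problem is thereby reduced to simplifying the embedded $M$ without changing its boundary.

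The core of the argument is to reduce $M$ to a ball by ambient surgery. After an initial round of $0$-surgeries I may assume $M$ is connected. To kill $\pi_1(M)$, I represent a generating set by disjoint embedded circles $\gamma \subseteq \mathring{M}$. Each $\gamma$ is null-homotopic in the simply connected $B^5$, so it bounds a map of a disk; by general position (a $2$-complex embeds generically in a $5$-manifold) this disk $\Delta$ can be taken embedded, and its normal framing can be matched to the normal $2$-plane field of $\gamma$ in $M$ because all bundles in sight are trivial over a circle or a disk. Performing ambient surgery along these circles yields a simply connected properly embedded $M' \subseteq B^5$ with $\partial M' = K$. Such an $M'$ has $H_1 = 0$, and the Euler-characteristic identity $\chi(M') = \tfrac{1}{2}\chi(S^2) = 1$ forces $H_2 = 0$ as well; capping the boundary with a ball produces a simply connected homology $3$-sphere, which by the (now-proven) Poincar\'e conjecture is $S^3$. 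Hence $M' \cong B^3$, and being embedded with $\partial M' = K$ it is the desired smooth $3$-ball in $B^5$.

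The main obstacle lies in the middle step: unlike abstract surgery, the surgery disks must be made disjoint from $M$ in their interiors, and here $\dim \Delta + \dim M = 2 + 3 = 5 = \dim B^5$, so generic interior intersections are isolated points that general position alone cannot remove. The key technical device is to \emph{pipe} each such point off the boundary of $\Delta$: joining the intersection point to $\gamma = \partial \Delta$ by an arc in $M$ and tubing the disk along this arc removes the intersection without creating new ones, exploiting the fact that $\partial \Delta$ already lies on $M$. Verifying that these piping moves, together with the framing choices, can be carried out consistently for an entire generating set of $\pi_1(M)$ — and that the $0$- and $1$-surgeries do not interfere — is where the genuine content sits; the homological bookkeeping and the final appeal to Poincar\'e are then routine.
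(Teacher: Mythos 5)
Your overall strategy is the classical Kervaire-style route, and the paper itself offers no proof of this statement (it simply cites Kervaire), so the comparison is with that classical argument. Much of your outline is on target: the Seifert solid via a map $S^4 \setminus K \to S^1$ dual to the meridian, pushing it properly into $B^5$, the observation that the surgery disks themselves can be made mutually disjoint and embedded by general position (indeed $2+2 < 5$, so two generic $2$-disks in $B^5$ are disjoint), the identification of the interior intersections $\Delta \cap M$ as the real difficulty together with the piping-over-the-boundary device, and the homological endgame ($\chi(M') = \tfrac12\chi(\partial M')$, capping with a ball, then Perelman --- legitimate today, though note Kervaire's 1965 proof predates the Poincar\'e conjecture, a sign that the classical argument cannot end quite the way yours does).

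The genuine gap is the assertion that ambient surgery on a generating set of $\pi_1(M)$ ``yields a simply connected $M'$.'' In this dimension, surgery on a circle in a $3$-manifold is a \emph{middle-dimensional} operation, not surgery below the middle dimension: $\gamma$ has codimension $2$ in $M$, so $\pi_1(M \setminus \gamma) \neq \pi_1(M)$ in general, and the reglued $D^2 \times S^1$ contributes a new circle. Van Kampen gives $\pi_1(M') \cong \pi_1(M\setminus\gamma)/\langle\langle \lambda_f \rangle\rangle$, where $\lambda_f$ is the framing curve determined by your disk --- this is \emph{not} $\pi_1(M)/\langle\langle \gamma \rangle\rangle$. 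Concretely, ambient surgery on a small trivial circle with the framing induced by its obvious disk converts $M$ into $M \# (S^1 \times S^2)$, creating fundamental group; surgery on the core generator of a lens-space summand yields other lens spaces whose groups depend delicately on the framing. So one round of surgeries need not finish, and your iteration comes with no termination argument. This interacts with the second point you wave away: the obstruction to extending the normal $2$-frame of $\gamma \subset M$ over $\Delta$ lies in $\pi_1(V_2(\R^3)) \cong \pi_1(SO(3)) \cong \Z/2$ and does not vanish merely because the bundles are trivial; it can be corrected only by changing the framing on $\gamma$ by an odd amount (using that $\pi_1(SO(2)) \to \pi_1(SO(3))$ is onto). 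But any repair of the $\pi_1$ problem --- e.g.\ choosing the surgery link \emph{and its framings} so that the abstract output is a punctured $S^3$, as Lickorish--Wallace guarantees is abstractly possible --- pins those framings down, so the framing-realization and $\pi_1$-control issues must be solved jointly, and that joint argument is exactly the content of the low-dimensional case of Kervaire's theorem that your sketch is missing.
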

\noindent
Thus, even though knot concordance is a rich subject in the classical case of knots in $S^3$, the concordance group of knotted 2-spheres in $S^4$ is trivial. We thus instead have:

\begin{definition}[Melvin~\cite{melvin}]\label{0def}
We say that two 2-knots $K_0$ and $K_1$ in $S^4$ are {\emph{0-concordant}} if there exists a smooth embedding $f:S^2\times I\to S^4\times I$ such that:
\begin{enumerate}
    \item We have $f(S^2\times \{0\})=K_0\times \{0\}$ and $f(S^2\times \{1\})=K_1\times \{1\}$; that is, $f$ constitutes a concordance.
    \item The height function $h: S^4 \times I \rightarrow I$ (given by projection onto the second factor) is Morse when restricted to the image $f(S^2\times I)$.
    \item For each $t\in[0,1]$, the cross-section $f(S^2\times I)\cap h^{-1}(t)$ is either singular or a disjoint union of $2$-spheres.
\end{enumerate}
\noindent
If $K$ is $0$-concordant to the unknotted $2$-sphere, then we say $K$ is {\emph{0-slice}}.
\end{definition}

Using Definition~\ref{0def}, we form a commutative monoid $\mathcal{M}_0$ whose elements are equivalence classes of
$2$-knots under the relation of $0$-concordance. The monoid operation is given by connected sum, while the identity is given by the class of the unknot in $S^4$. Note that if $K$ is a 2-knot, then we do \textit{not} necessarily expect $K\#-\overline{K}$ to be $0$-slice. 

In~\cite{melvin}, Melvin showed that if $K_0$ and $K_1$ are $0$-concordant, then the Gluck twists of $S^4$ about $K_0$ and $K_1$ are diffeomorphic homotopy 4-spheres. This leads to the following natural question:

\begin{question}[Kirby Problem 1.105(A)~\cite{kirby} (partial)]\label{kirbyprob}
Is every 2-knot 0-slice?
\end{question}

\noindent
In~\cite{sunukjian}, Sunukjian gave a negative answer to Question~\ref{kirbyprob} by producing an infinite family of 2-knots which are distinct up to 0-concordance. Sunukjian's argument utilized the following obstruction:

\begin{theorem}[Sunukjian~\cite{sunukjian}]\label{nathanthm}
Let $S_0$ and $S_1$ be 2-knots, and suppose that $S_0$ and $S_1$ bound embedded punctured rational homology spheres $\smash{\int{Y_0}}$ and $\smash{\int{Y_1}}$, respectively. If $S_0$ and $S_1$ are $0$-concordant, then $d(Y_0,\mathfrak{s_0})=d(Y_1,\mathfrak{s_1})$, where $\mathfrak{s_i}$ is the spin$^c$-structure on $Y_i$ induced from the trivial spin$^c$-structure on $S^4.$
\end{theorem}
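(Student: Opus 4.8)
The plan is to reduce the statement to standard properties of the Ozsv\'ath--Szab\'o correction term. Recall that $d$ is additive, $d(Y\#Y',\mathfrak{s}\#\mathfrak{s}')=d(Y,\mathfrak{s})+d(Y',\mathfrak{s}')$, that $d(\overline{Y},\overline{\mathfrak{s}})=-d(Y,\mathfrak{s})$, and that $d(Y,\mathfrak{s})=0$ whenever $(Y,\mathfrak{s})$ bounds a spin$^c$ rational homology ball. Hence it suffices to exhibit a rational homology ball $X$ with $\partial X\cong Y_1\#\overline{Y_2}$ restricting on the boundary to the spin$^c$ structure determined by $\mathfrak{s}_1$ and $\overline{\mathfrak{s}_2}$; the identity $d(Y_1,\mathfrak{s}_1)-d(Y_2,\mathfrak{s}_2)=0$ then follows formally.

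To produce $X$ I would sweep the given Seifert solids along the $0$-concordance. Let $C=f(S^2\times I)\subset S^4\times I$ denote the concordance. By condition (3) of Definition~\ref{0def}, every regular level $C\cap h^{-1}(t)$ is a disjoint union of $2$-spheres in $S^4$; filling each such sphere by a Seifert solid and letting $t$ range over $[0,1]$ traces out a $4$-dimensional region $X\subset S^4\times I$ whose boundary consists of $\int{Y_1}$ at $t=0$, of $\int{Y_2}$ at $t=1$, and of $C$ along the sides. Since $C\cong S^2\times I$ is a collar, gluing these faces identifies $\partial X$ with $\int{Y_1}\cup_{S^2}\int{Y_2}=Y_1\#\overline{Y_2}$. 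This is exactly the region built in the proof of Theorem~\ref{mainthm} and in the arguments of~\cite{sunukjian},~\cite{ruberman2}; invoking Theorem~\ref{mainthm} outright furnishes $X$ as a \emph{spin} rational homology ball, so I may treat its existence as known.

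It remains to identify the spin$^c$ structure that $X$ induces on its boundary. Since $X$ is a co-oriented codimension-one region inside the spin manifold $S^4\times I$, the ambient spin structure restricts to one on $X$ and hence on $\partial X$; on the two ends this is precisely the spin structure that $\int{Y_i}$ inherits as a hypersurface of $S^4$, whose associated spin$^c$ structure is $\mathfrak{s}_i$ by definition. Thus the boundary spin$^c$ structure is the one determined by $\mathfrak{s}_1$ and $\overline{\mathfrak{s}_2}$, and combining this with the three facts recalled above gives $d(Y_1,\mathfrak{s}_1)=d(Y_2,\mathfrak{s}_2)$.

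The main obstacle is the homology computation packaged in the existence of $X$, namely that the swept region is a rational homology ball. The delicate point is that the $2$-spheres appearing at intermediate times may be knotted and linked, so one must choose the interpolating Seifert solids coherently across the births, deaths, and saddle moves recorded by $h|_C$. The essential inputs are that a newly born sphere is unknotted and therefore bounds a $3$-ball (contributing nothing to rational homology), that a saddle corresponds to a boundary connected sum of the relevant solids, and that $Y_1$ and $Y_2$ are rational homology spheres; once these combine to show $X$ is a rational homology ball---equivalently, once Theorem~\ref{mainthm} is applied---the remainder is formal. Alternatively, one may tube $X$ into a genuine rational homology cobordism $Z$ from $Y_1$ to $Y_2$ and observe that $Z$ and $-Z$ both have trivial rational intersection form, so the correction-term inequalities of Ozsv\'ath and Szab\'o applied in both directions force $d(Y_1,\mathfrak{s}_1)=d(Y_2,\mathfrak{s}_2)$.
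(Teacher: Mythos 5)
Your proposal is correct and follows essentially the route the paper itself indicates: Theorem~\ref{nathanthm} is obtained from Theorem~\ref{mainthm} (whose proof is independent of it) together with the standard facts that $d$ is additive, reverses sign under orientation reversal, and vanishes on rational homology spheres bounding spin$^c$ rational homology balls --- precisely the remark in Section~\ref{sec:background} that Theorem~\ref{mainthm} strengthens Theorem~\ref{nathanthm} because $d$ is a rational homology cobordism invariant (Sunukjian's original argument instead used the twisted $d$-invariants of Behrens--Golla). One caveat: your ``sweeping'' description of the region $X$ is not how the cobordism is actually built (the paper uses ribbon-concordance decomposition, resolution of immersed Seifert solids, surgery along the middle knot $R$, and passage to a cyclic cover to separate the two solids), and it would not work as a standalone construction for the reasons you yourself flag; since you ultimately invoke Theorem~\ref{mainthm} for the existence of the cobordism, this does not affect the validity of the proof.
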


\noindent
Note that since the $d$-invariant is a rational homology cobordism invariant, Theorem~\ref{mainthm} is evidently a strengthening of Theorem~\ref{nathanthm}.
\begin{corollary}[Sunukjian~\cite{sunukjian}]
The 0-concordance monoid contains a submonoid isomorphic to $\Z^{\ge 0}$.
\end{corollary}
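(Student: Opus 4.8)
The plan is to produce a single $2$-knot $S$ whose nonnegative connected-sum multiples $nS := \#^n S$ lie in pairwise distinct $0$-concordance classes; the assignment $n \mapsto [nS]$ is then an injective monoid homomorphism $\Z^{\ge 0} \to \mathcal{M}_0$, and its image is the desired submonoid. By Theorem~\ref{nathanthm}, a clean way to separate these classes is through the correction term: it suffices to find $S$ bounding a punctured rational homology sphere $\int{Y}$ for which $d(Y, \mathfrak{s}) \neq 0$, where $\mathfrak{s}$ is the spin$^c$ structure induced from $S^4$.

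First I would fix an explicit generator. By Zeeman's theorem~\cite{zeeman}, the $2$-twist spin of a classical knot $K$ bounds a punctured copy of the double branched cover $\Sigma_2(K)$. Taking $K$ to be the torus knot $T(3,5)$, this double branched cover is the Poincar\'e homology sphere $\Sigma(2,3,5)$, an integer homology sphere whose correction term is nonzero. Let $S$ be this $2$-twist spin, so that $S$ bounds $\int{\Sigma}(2,3,5)$.

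Next I would track the multiples. Since a connected sum of $2$-knots is realized by the connected sum of their Seifert solids, $nS$ bounds $\int{(\#^n \Sigma(2,3,5))}$. By additivity of the correction term under connected sum, $d(\#^n \Sigma(2,3,5)) = n \cdot d(\Sigma(2,3,5))$, and these values are pairwise distinct because $d(\Sigma(2,3,5)) \neq 0$. Applying Theorem~\ref{nathanthm} then shows $[mS] \neq [nS]$ whenever $m \neq n$, so $\{[nS] : n \ge 0\}$ is a submonoid of $\mathcal{M}_0$ isomorphic to $\Z^{\ge 0}$.

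The step requiring the most care is matching the spin$^c$ structure $\mathfrak{s}$ induced from $S^4$ with a structure on which $d$ is nonzero, together with the verification that additivity of $d$ holds for precisely that structure. Choosing an integer homology sphere Seifert solid sidesteps the first difficulty, since the spin$^c$ structure is then unique and additivity of the correction term over connected sums is standard. If one instead prefers a lens space Seifert solid (for instance $\Sigma_2(T(2,3)) = L(3,1)$, in the spirit of Corollary~\ref{2bridgecor}), one must verify that the self-conjugate spin$^c$ structure induced from $S^4$ carries a nonzero correction term and decomposes additively under connected sum.
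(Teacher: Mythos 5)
Your proposal is correct and follows essentially the same route as the paper: exhibit a $2$-knot bounding a punctured Poincar\'{e} homology sphere $\Sigma(2,3,5)$ via Zeeman's theorem, then use additivity of the $d$-invariant together with Theorem~\ref{nathanthm} to separate the multiples $\#^n S$. The only (immaterial) difference is the choice of generator---the paper uses the $5$-twist spun trefoil, whose $5$-fold branched cover Seifert solid is $\Sigma(2,3,5)$, whereas you use the $2$-twist spin of $T(3,5)$, which bounds the same punctured Seifert solid; your remark that an integer homology sphere sidesteps the spin$^c$ bookkeeping is exactly the right observation.
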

\begin{proof}
This follows immediately as in Section~\ref{sec:intro} by producing a Seifert solid with nonzero $d$-invariant. Indeed, let $S$ be the $5$-twist spun trefoil. Then $S$ bounds a punctured Poincar\'{e} homology sphere $\smash{\int{Y}}$ in $S^4$. (As described previously, the $5$-fold cover of $S^3$ branched along $T(2, 3)$ is the Poincar\'{e} homology sphere $\Sigma(2, 3, 5)$.) Since $\#n S$ bounds a punctured copy of $\#nY$ and $d(nY) = 2n$, it follows from Theorem~\ref{nathanthm} that $[S]$ is not torsion in $\mathcal{M}_0$.
\end{proof}

\section{Proof of Theorem~\ref{mainthm}}

We now turn to the proof of Theorem~\ref{mainthm}. For convenience of the reader, we describe the geometric setup of Sunukjian~\cite{sunukjian}:

\begin{definition}
Given 2-knots $S_0$ and $S_1$ in $S^4$, we say that there is a {\emph{ribbon concordance from $S_0$ to $S_1$}} if 
there exists a smooth embedding $f:S^2\times I\to S^4\times I$ such that:
\begin{enumerate}
    \item We have $f(S^2\times \{0\})=S_0\times \{0\}$ and $f(S^2\times \{1\})=S_1\times \{1\}$; that is, $f$ constitutes a concordance.
    \item The height function $h: S^4 \times I \rightarrow I$ (given by projection onto the second factor) is Morse when restricted to the image $f(S^2\times I)$.
    \item Every critical point of $h$ restricted to $f(S^2 \times I)$ is either of index-0 or index-1.
\end{enumerate}
\noindent
If there is a ribbon concordance from the unknotted $2$-sphere to $K$, then we say $K$ is {\emph{ribbon}}.
\end{definition}

Note that unlike 0-concordance, ribbon concordance is not a symmetric relation. Sunukjian's observation was that a 0-concordance can be decomposed into two ribbon concordances, each from one end of the original 0-concordance to the 2-knot in the ``middle cross-section". We recall his argument here:

\begin{lemma}[Sunkujian \cite{sunukjian2}]\label{commonribbon}
If $S_0$ and $S_1$ are 0-concordant 2-knots, then there exists a 2-knot $R$ so that there is a ribbon concordance from $K_0$ to $R$ and also a ribbon concordance from $K_1$ to $R$.
\end{lemma}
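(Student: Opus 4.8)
The plan is to study the $0$-concordance Morse-theoretically. Write $F = f(S^2 \times I)$ for the image of the $0$-concordance, and let $h$ be the ambient height function restricted to $F$, so that $F$ is a copy of $S^2 \times I$ and $h\colon F \to I$ is Morse with $S_1$ and $S_2$ as its two end level sets. The first step is to classify the critical points of $h$ using condition (3) of Definition~\ref{0def}, which forces every regular level set $F \cap h^{-1}(t)$ to be a disjoint union of $2$-spheres. Because each level set is a union of spheres, an index-$1$ point cannot attach its band with both feet on a single sphere, as this would surger it into a torus; hence every index-$1$ point tubes two \emph{distinct} spheres into one, and dually every index-$2$ point splits one sphere into two. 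Thus the critical points of $h$ are exactly births of trivial spheres (index $0$), merges (index $1$), splits (index $2$), and deaths of spheres (index $3$).

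Next I would rearrange $h$ so that all index-$0$ and index-$1$ critical points lie below all index-$2$ and index-$3$ critical points. Any Morse function can be made self-indexing, so this ordering is achievable in principle; the point requiring care is that the reordering must be realized by an ambient isotopy of $f$ that keeps every intermediate cross-section an embedded union of $2$-spheres, which is routine in codimension $2$. Choosing a regular value $t_*$ strictly between the top index-$1$ critical value and the bottom index-$2$ critical value, set $R_0 = F \cap h^{-1}(t_*)$.

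Cutting $F$ along $h^{-1}(t_*)$ then produces the two desired ribbon concordances, up to connectivity of $R_0$. The lower piece $F \cap h^{-1}([0,t_*])$ carries only index-$0$ and index-$1$ critical points. The upper piece $F \cap h^{-1}([t_*,1])$ carries only index-$2$ and index-$3$ critical points, and reversing the height through $h \mapsto 1-h$ turns an index-$k$ point into an index-$(3-k)$ point, so it carries only index-$0$ and index-$1$ critical points when read from $S_2$ downward. As soon as $R_0$ is a single sphere, the lower piece is an honest ribbon concordance from $S_1$ to $R_0$ and the reversed upper piece is an honest ribbon concordance from $S_2$ to $R_0$, with common target $R = R_0$.

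The remaining and principal obstacle is to ensure $R_0$ is connected, since a priori it is a link $R_1 \sqcup \cdots \sqcup R_c$ of spheres. I would remedy this with a tubing trick: for each $j \geq 2$, insert one extra index-$1$ critical point just below $t_*$ tubing $R_j$ into the distinguished component $R_1$ (legitimate, as it joins two distinct spheres), together with a parallel index-$2$ critical point just above $t_*$ that splits $R_j$ back off. These additions keep the lower piece in indices $\{0,1\}$ and the upper piece in indices $\{2,3\}$ while collapsing the middle cross-section to a single sphere $R = R_1 \cup \cdots \cup R_c$. Finally I would verify that each piece is genuinely a product $S^2 \times I$: because every $1$-handle joins two distinct spheres, the associated merge graph is a forest, and a tree of $0$- and $1$-handles attached to $S^2$ assembles into $S^2 \times I$. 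I expect essentially all of the technical difficulty to reside in this connectivity step and in carrying out the rearrangement of the second step through embedded sphere-link cross-sections.
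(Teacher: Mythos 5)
Your overall strategy---classify the critical points via condition (3) of Definition~\ref{0def}, rearrange so that the index-$0$ and index-$1$ points lie below the index-$2$ and index-$3$ points, and take the middle cross-section---is the same as the paper's, and your classification of the critical points as births, merges of \emph{distinct} spheres, splits, and deaths is correct. However, there is a genuine gap at the step you dismiss as routine: the claim that the reordering can be realized while keeping every intermediate cross-section a disjoint union of $2$-spheres. General position does let you reorder the critical points, but it does not control the topology of the new cross-sections. Concretely, when a merge is pushed below a split, its two feet---which originally lay on two distinct spheres, possibly the two offspring of that very split---may both land on a single sphere of the lower level set, and the merge then produces a torus. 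If the number $n_1$ of index-$1$ points exceeds the number $n_0$ of index-$0$ points, this is unavoidable: the rearranged lower piece is $(S^2\times I)\sqcup n_0 B^3$ with $n_1>n_0$ one-handles, so after reordering some one-handle must have both feet on a single component, and the middle cross-section acquires genus. Your tubing trick repairs disconnectedness, but genus is the dangerous failure mode here, and inserting tube/split pairs cannot remove it.

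What is missing is exactly the paper's counting argument. Connectivity of $M=f(S^2\times I)$ gives $n_1\ge n_0$; for the reverse inequality, if $n_1>n_0$ then, since $H_1(S^2\times I,\,S^2)=0$, some $1$-handle would have to cancel algebraically against a $2$-handle, forcing a belt circle and an attaching circle to intersect algebraically once inside a cross-section $\Sigma$---impossible, since any two curves on a disjoint union of $2$-spheres have algebraic intersection number zero. Once $n_0=n_1$ is known, the rearranged lower piece is connected with equally many $0$- and $1$-handles, hence a tree of balls boundary-summed onto $S^2\times I$; the middle cross-section is then automatically a single $2$-sphere, no tubing is needed, and no control over the intermediate cross-sections during the reordering is required. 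In short, your proposal correctly locates the difficulty in the rearrangement and connectivity steps, but the homological input that resolves it (and that justifies your ``routine'' assertion) is absent, and the repair you supply addresses the wrong failure mode.
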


\begin{proof}
Let $M$ denote the embedded cylinder $f(S^2 \times I)$ afforded in the definition of 0-concordance. The Morse function $h|_M$ induces a handle decomposition of $M$ relative to $S_0\times \{0\}$. Because $M$ is connected, there are at least as many 1-handles as 0-handles in this decomposition.

If there are more 1-handles than 0-handles, then (since $M$ has no first homology) some 1-handle $H_1$ must cancel algebraically with a higher 2-handle $H_2$ corresponding to an index-2 critical point of $f$ whose image lies in some slice $S^4\times \{t_0\}$. Then the intersection $\Sigma := M \cap (S^4\times\{t_0-\epsilon\})$ contains the belt sphere of $H_1$ and the attaching sphere of $H_2$. These circles intersect algebraically once, so must be essential in $\Sigma$. This contradicts the fact that $\Sigma$ is a genus zero surface. By contradiction, there must be an equal number of 0- and 1-handles in $M$. That is, $h|_M$ has an equal number of index-0 and index-1 points.

Isotope $f$ to reorder the critical points of $h|_M$ to be in order of increasing index, with the index-0 and index-1 critical points in $S^4\times(0,1/4)$ and the index-2 and index-3 critical points in $S^4\times(3/4,1)$. Let $R = M\cap(S^4\times \{1/2\})$. Then $M\cap (S^4\times[0,1/2])$ is a ribbon concordance from $K_0$ to $R$, and $(S^4\times[1/2,1])$ (when viewed backwards) is a ribbon concordance from $K_1$ to $R$.
\end{proof}

The crucial point of Lemma~\ref{commonribbon} is that Seifert solids for $S_0$ and $S_1$ induce Seifert solids for $R$, as follows:

\begin{lemma}[Sunukjian \cite{sunukjian}, Yanagawa \cite{yanagawa}]\label{commonmfd}
Let $S$ be a 2-knot which bounds a punctured Seifert solid $\smash{\int{Y}}$ in $S^4$. Suppose that there is a ribbon concordance from $S$ to $R$. Then $R$ bounds $\smash{\int{Y}} \#_k(S^1\times S^2)$ for some non-negative integer $k$.
\end{lemma}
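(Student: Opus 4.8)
The plan is to reorganize the ribbon concordance into a standard fusion picture and then build an embedded Seifert solid for $R$ by hand, following Yanagawa's technique relativized to $\int{Y}$. Let $C$ denote the ribbon concordance from $S$ to $R$. First I would apply the Morse-theoretic rearrangement lemma to $h|_C$ so that all index-$0$ critical points lie below all index-$1$ critical points. At an intervening level the cross-section is then $S\sqcup U_1\sqcup\dots\sqcup U_m$, where the $U_i$ are small, unknotted, unlinked $2$-spheres born at the index-$0$ points; continuing upward, the $m$ index-$1$ points attach bands $b_1,\dots,b_m$ whose fusion is $R$. Since $S$ bounds $\int{Y}$ and each $U_i$ bounds a small $3$-ball $B_i$, which we may take disjoint from $\int{Y}$ and from one another, the surface $S\sqcup U_1\sqcup\dots\sqcup U_m$ bounds $\int{Y}\sqcup B_1\sqcup\dots\sqcup B_m$.

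Next I would try to produce a Seifert solid for $R$ by attaching, for each band $b_j$, an embedded $3$-dimensional $1$-handle $H_j$ running alongside $b_j$ and joining two boundary spheres of the current solid. If every $H_j$ could be taken disjoint from one another and from the interior of $\int{Y}\sqcup\bigsqcup_i B_i$, the result would be a tree of boundary connected sums, hence simply $\int{Y}$; this is exactly what occurs in easy cases, and is consistent with the fact that fusing on a single trivial $2$-sphere yields the connected sum with an unknot, i.e.\ the original knot. The reason a nontrivial $\#_k(S^1\times S^2)$ can appear is that the $H_j$ cannot in general be pushed off the solid: the core of $H_j$ is an arc meeting the $3$-dimensional solid inside $S^4$, and since $1+3=4$, its algebraic intersection number with the solid is an isotopy invariant that need not vanish. (Indeed, were it always possible to remove these intersections, every ribbon $2$-knot would bound an embedded $B^3$ and hence be unknotted, which is false; so the obstruction is genuine.)

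Where a core arc is forced to pass through the solid, I would resolve the intersection by the standard ribbon-type cut-and-tube move, which preserves embeddedness by routing the $1$-handle along the boundary of the solid it would otherwise puncture. Each such resolution creates an essential loop in the (now connected) solid and therefore changes the diffeomorphism type by a connected sum with $S^1\times S^2$; orientability of the Seifert solid rules out the twisted bundle. After resolving all intersections and absorbing the balls $B_i$ — which contribute nothing, as boundary connected sum with $B^3$ is trivial — the resulting embedded hypersurface is a Seifert solid for $R$ diffeomorphic to $\int{Y}\#_k(S^1\times S^2)$, where $k\ge 0$ counts the resolved intersections. The main obstacle is precisely this last step: verifying that each ribbon-type resolution contributes exactly one $S^1\times S^2$ summand and leaves the diffeomorphism type of $\int{Y}$ otherwise unchanged, so that no more complicated summands can arise.
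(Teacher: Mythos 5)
Your proposal is correct and follows essentially the same route as the paper: both build an immersed Seifert solid for $R$ from $\smash{\int{Y}}$ together with disjoint $3$-balls (one per index-$0$ point) and tubes (one per index-$1$ point) that may pass through the solid, and then resolve each ribbon self-intersection at the cost of one $S^1\times S^2$ connected summand. The step you flag as the main obstacle is exactly the local cut-and-paste move the paper handles by its Figure, namely replacing a $3$-ball of the immersed solid containing the ribbon intersection by a punctured $S^1\times S^2$.
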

\begin{proof}
Using the ribbon concordance from $S$ to $R$, we see that $R$ bounds an immersed copy of $\smash{\int{Y}}$, which we denote by $V$. More precisely, $V$ is obtained from $\smash{\int{Y}}$ by adding disjoint 3-balls (one for each index-0 point of the ribbon concordance) and then tubing these to $\smash{\int{Y}}$ along three-dimensional 1-handles (one for each index-1 point of the ribbon concordance). Note that the 1-handles may intersect the interior of $\smash{\int{Y}}$ and/or the interiors of the 3-balls. All intersections are disks, as in the top of Figure \ref{fig:cutandpaste}.

We then resolve the self-intersections of $V$ by cut-and-paste surgery to find a 3-manifold $\int{M}$ embedded in $S^4$ with $\boundary\int{M} = R$, as in Figure~\ref{fig:cutandpaste}. (See also the proof of \cite[Theorem 5.2]{ruberman3}.) Each cut-and-paste operation replaces a 3-ball in $V$ (containing a region of self-intersection in its interior) with a punctured $S^1\times S^2$. Hence we see that $\int{M} \cong \smash{\int{Y}}\#_{k}(S^1\times S^2)$ for some $k\ge 0.$ 
\end{proof}

\begin{figure}
    \centering
    \labellist
\small\hair 2pt
\pinlabel {time} at 350 -10
\endlabellist
    \includegraphics[width=80mm]{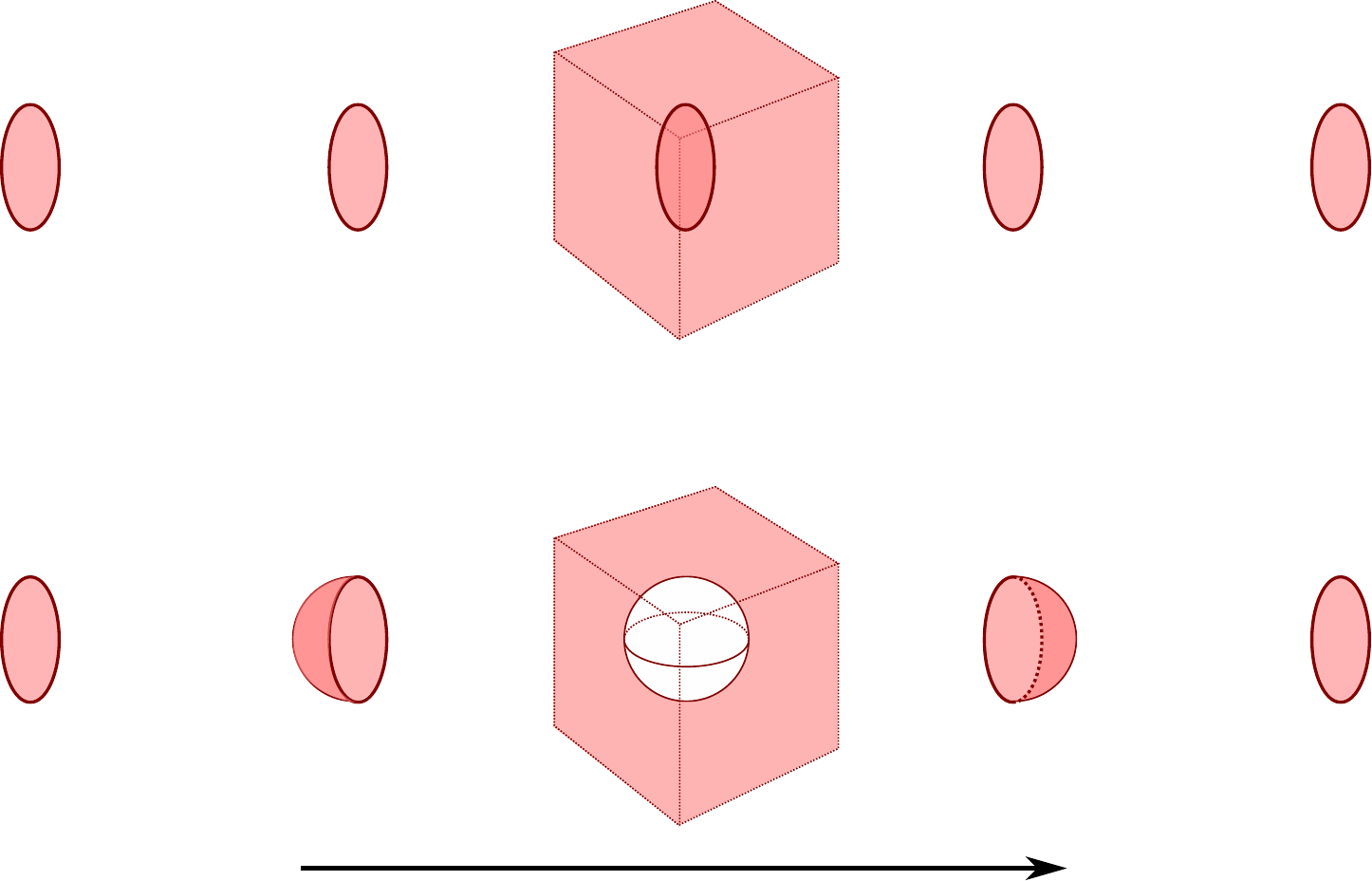}\\
    \vspace{.2in}
    \includegraphics[width=70mm]{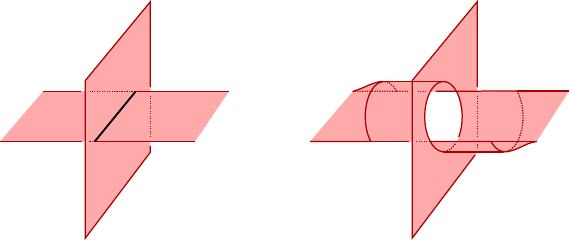}
    \caption{{\bf{Top:}} a movie of a ribbon self-intersection of a 3-manifold $V$ ribbon-immersed in $S^4$. The fourth dimension is taken to be time; at each time we see a 3-dimensional picture. In this picture there are two local sheets of $V$ which intersect in a disk. {\bf{Middle:}} We resolve the ribbon self-intersections of $V$ to find an embedded submanifold $\smash{\int{M}}$ of $S^4$. We have $\smash{\int{M}}\cong \smash{\int{Y}}\#_k(S^1\times S^2)$, where $k$ is the number of ribbon self-intersections of $V$ in $S^4$ and $\smash{\int{Y}}$ is the domain of the immersed manifold $V$. {\bf{Bottom:}} We draw the analogous operation a dimension down. Here, we see a ribbon self-intersection of an immersed surface. We may cut-and-paste to remove this intersection at the cost of increasing the surface genus.}
    \label{fig:cutandpaste}
\end{figure}

Now let $S_0$ and $S_1$ be 0-concordant 2-knots that bound Seifert solids $\smash{\int{Y_0}}$ and $\smash{\int{Y_1}}$ in $S^4$. Using Lemmas~\ref{commonribbon} and~\ref{commonmfd}, we obtain a 2-knot $R$ which bounds two Seifert solids given by stabilizations $\smash{\int{Y_0}} \#_{k_0} (S^1 \times S^2)$ and $\smash{\int{Y_1}} \#_{k_1} (S^1 \times S^2)$ for some $k_0, k_1 \ge 0$. For convenience, we denote 
\begin{align*}
M_1 &= Y_0 \#_{k_0} (S^1 \times S^2), \\
M_2 &= Y_1 \#_{k_1} (S^1 \times S^2). 
\end{align*}
Note that $\smash{\int{M_1}}$ and $\smash{\int{M_2}}$ are each individually embedded in $S^4$, although they may intersect each other. We now surger $S^4$ along $R$ to obtain a 4-manifold $X$ with the integer homology of $S^1 \times S^3$. This allows us to cap off $\smash{\int{M_1}}$ and $\smash{\int{M_2}}$ to obtain embeddings of $M_1$ and $M_2$ into $X$. It is easily checked that these are \textit{cross-sections} of $X$; that is, $M_1$ and $M_2$ represent generators of $H_3(X)$.

Developing invariants for such cross-sections has been studied extensively by several authors. Behrens-Golla~\cite{behrens} and Ruberman-Levine~\cite{ruberman} have both defined ``twisted" versions of the Heegaard Floer $d$-invariant which can be used to obstruct whether two given cross-sections can appear at the same time. Indeed, Sunukjian's proof of Theorem~\ref{nathanthm} used the twisted $d$-invariant developed by Behrens-Golla. Here, however, we will sidestep the need for these newer invariants by reducing the question to one of spin rational homology cobordism.

\begin{proof}[Proof of Theorem~\ref{mainthm}]
Let $X$, $M_1$, and $M_2$ be as above. Let $X_p$ be a $p$-fold cover of $X$, where $p$ is some large prime. Then $X_p$ has the rational homology of $S^1\times S^3$ by a result of Sumners~\cite[Theorem 3]{sumners}; if $p$ is large enough, we can moreover find lifts of $M_1$ and $M_2$ in $X_p$ which are disjoint cross-sections. Let $W$ be one component of $X_p \setminus(M_1 \cup M_2)$, so that $W$ is a cobordism from $M_1$ to $M_2$. We claim that $i_*:H_2(\boundary W; \Q) \rightarrow H_2(W; \Q)$ is surjective. This follows immediately from the Mayer-Vietoris sequence
\[
\cdots \rightarrow H_2(\boundary W; \Q) \rightarrow H_2(W; \Q) \oplus H_2(W^c; \Q) \rightarrow H_2(X_p; \Q) = 0, 
\]
where $W^c$ is the other component of $X_p \setminus(M_1 \cup M_2)$. (One can also use the fact that the absolute intersection form on $W$ vanishes identically.) Note that $X$ is spin, since $X$ is a homology $S^1 \times S^3$. Hence $X_p$ is spin, being the cover of a spin manifold; it follows that $W$ is spin, being a codimension-zero submanifold of a spin manifold.

Now attach four-dimensional 3-handles to each end of $W$ along the generators of $H_2(M_1; \Z)$ and $H_2(M_2; \Z)$. That is, recalling that $M_1$ and $M_2$ are stabilizations of the rational homology spheres $Y_0$ and $Y_1$, attach 3-handles along an essential $S^2$ in each $S^1\times S^2$ summand of $M_1$ and $M_2$ to obtain a cobordism $W'$ from $Y_0$ to $Y_1$. Since $i_*:H_2(\boundary W; \Q) \rightarrow H_2(W; \Q)$ is surjective, this has the effect of killing the second homology and making $H_2(W'; \Q) = 0$. It is not difficult to check that $W'$ is also spin. Indeed, each 3-handle attaching region is just $S^2 \times I$. This has a unique spin structure, which evidently extends over the 3-handle.

Without loss of generality, we may further assume that $b_1(W') = 0$ by surgering out representatives of the generators of $H_1(W';\Z)$. More precisely, let $\gamma$ be a curve representing a generator of $H_1(W'; \Z)$, and cut out a neighborhood $\nu(\gamma)$ of $\gamma$ in $W'$. There are two choices of framing when gluing in $D^2\times S^2$ to $\boundary\nu(\gamma) \cong S^1 \times S^2$; we choose the framing so that the spin structure of $W'$ restricted to $\boundary\nu(\gamma)$ extends over the glued-in $D^2\times S^2$. Since $\gamma$ is not rationally nullhomologous, it is easily checked that $H_2(W'';\Q)=0$.

We thus have obtained a spin cobordism $W''$ between $Y_0$ and $Y_1$ with $H_1(W'';\Q) = H_2(W'';\Q) = 0$. Since $\partial W''$ is the disjoint union of two rational homology spheres, it follows from Poincar\'{e} duality that $H_3(W'';\Q)=\Q$. Hence $W''$ is a spin rational homology cobordism from $Y_0$ to $Y_1$.
\end{proof}

\begin{remark}
The proof of Theorem \ref{mainthm} holds in any integer homology 4-sphere $X$. That is, suppose that $S_0$ and $S_1$ are 2-spheres smoothly embedded in $X$ and that $S_0$ and $S_1$ bound bound embedded punctured rational homology spheres $\smash{\int{Y_0}}$ and $\smash{\int{Y_1}}$, respectively. If $S_0$ and $S_1$ are $0$-concordant (i.e.\ there is a concordance in $X\times I$ between $S_0$ and $S_1$ whose regular cross-sections are genus-zero), then there exists a spin rational homology cobordism from $Y_0$ to $Y_1$, with the spin structures on each $Y_i$ induced by the trivial spin structure on $X$ restricting to $\mathring{Y}_i$. Note that any punctured 3-manifold that embeds in $S^4$ also embeds in $X$. We conclude, using the same examples as in Theorem \ref{0thm}, that the monoid of 0-concordance classes of 2-spheres in $X$ contains a $(\mathbb{Z}^{\ge0})^\infty$ submonoid (even if we restrict ourselves to only classes containing a nullhomotopic 2-sphere in $X$).

\end{remark}

\bibliographystyle{abbrv}
\bibliography{biblioshort}

\end{document}